\newcommand{\tf}{\tfrac}
  \renewcommand{\a}{\alpha}
\renewcommand{\b}{\beta}
\newcommand{\G}{\Gamma}
\renewcommand{\l}{\lambda}
\renewcommand{\(}{\left\(}
\renewcommand{\)}{\right\)}
\renewcommand{\[}{\left\[}
\renewcommand{\]}{\right\]}
\numberwithin{equation}{section}
 \theoremstyle{plain}
\newtheorem{theorem}{Theorem}[section]
\newtheorem{lemma}[theorem]{Lemma}
\newtheorem{corollary}[theorem]{Corollary}
\newtheorem{remark}[]{Remark}
\def\proof{\@ifnextchar[{\@oproof}{\@nproof}}
\def\@oproof[#1][#2]{\trivlist\item[\hskip\labelsep\textit{#2 Proof of\
#1.}~]\ignorespaces}
\def\@nproof{\trivlist\item[\hskip\labelsep\textit{Proof.}~]\ignorespaces}
\begin{document}
\title[On Hurwitz zeta function and Lommel functions]{On Hurwitz zeta function and Lommel functions}
\author{Atul Dixit and Rahul Kumar}\thanks{2010 \textit{Mathematics Subject Classification.} Primary 11M06, 11M35; Secondary 33C10, 33C47.\\
\textit{Keywords and phrases.} Hurwitz zeta function, Lommel functions, Riemann zeta function, Hermite's formula, functional equation.}
\address{Department of Mathematics, Indian Institute of Technology, Gandhinagar, Palaj, Gandhinagar 382355, Gujarat, India}\email{adixit@iitgn.ac.in; rahul.kumr@iitgn.ac.in}
\dedicatory{Dedicated to Professor Bruce C. Berndt on the occasion of his 80th birthday}
\begin{abstract}
We obtain a new proof of Hurwitz's formula for the Hurwitz zeta function $\zeta(s, a)$ beginning with Hermite's formula. The aim is to reveal a nice connection between $\zeta(s, a)$ and a special case of the Lommel function $S_{\mu, \nu}(z)$. This connection is used to rephrase a modular-type transformation involving infinite series of Hurwitz zeta function in terms of those involving Lommel functions.
\end{abstract}
\maketitle
\section{Introduction}\label{intro}
The Hurwitz zeta function $\zeta(s, a)$ is defined for Re$(s)>1$ and $a\in\mathbb{C}\backslash\{x\in\mathbb{R}: x\leq 0\}$ by \cite[p.~36]{titch}
\begin{equation*}
\zeta(s, a):=\sum_{n=0}^{\infty}\frac{1}{(n+a)^{s}}.
\end{equation*}
It is well-known that for $\zeta(s, a)$ can be analytically continued to the entire $s$-complex plane except for a simple pole at $s=1$ with residue $1$, and that $\zeta(s, 1)=\zeta(s)$. 

One of the fundamental results in the theory of $\zeta(s, a)$ is the following formula of Hurwitz \cite[p.~37, Equation (2.17.3)]{titch}.
\begin{theorem}\label{fehurthm}
For $0<a\leq 1$ and $\textup{Re}(s)<0$,
\begin{equation}\label{fehur}
\zeta(s, a)=\frac{2\Gamma(1-s)}{(2\pi)^{1-s}}\left\{\sin\left(\frac{1}{2}\pi s\right)\sum_{n=1}^\infty \frac{\cos(2\pi na)}{n^{1-s}}+\cos\left(\frac{1}{2}\pi s\right)\sum_{n=1}^\infty \frac{\sin(2\pi na)}{n^{1-s}}\right\}.
\end{equation}
The above result also holds\footnote{See \cite[p.~257, Theorem 12.6]{apostol-1998a}.} for \textup{Re}$(s)<1$ if $0<a<1$.
\end{theorem}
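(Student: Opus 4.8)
The plan is to start from the classical formula of Hermite for the Hurwitz zeta function,
\begin{equation*}
\zeta(s,a)=\frac{1}{2}a^{-s}+\frac{a^{1-s}}{s-1}+2\int_{0}^{\infty}\frac{\sin\left(s\tan^{-1}(t/a)\right)}{(a^{2}+t^{2})^{s/2}\left(e^{2\pi t}-1\right)}\,dt,
\end{equation*}
valid for $\Re(a)>0$ and all $s\neq1$, and to massage the integral into the two Fourier-type series on the right of \eqref{fehur}. The bridge is the observation that the $n$-th piece obtained after expanding the Bose factor is, up to elementary factors, a value of a special case of the Lommel function $S_{\mu,\nu}$; its analytic properties are what ultimately remove the non-periodic terms $\tfrac12 a^{-s}+a^{1-s}/(s-1)$, which do not appear in \eqref{fehur}.

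First I would write the kernel as $\dfrac{\sin\left(s\tan^{-1}(t/a)\right)}{(a^{2}+t^{2})^{s/2}}=\dfrac{1}{2i}\left[(a-it)^{-s}-(a+it)^{-s}\right]$, expand $1/(e^{2\pi t}-1)=\sum_{n=1}^{\infty}e^{-2\pi nt}$ for $t>0$, and interchange summation and integration. Each integral $\int_{0}^{\infty}(a\mp it)^{-s}e^{-2\pi nt}\,dt$ I would evaluate by the substitution $z=a\mp it$ followed by a rotation of the contour onto the positive real axis, turning it into an upper incomplete gamma function; for the minus sign this gives
\begin{equation*}
\int_{0}^{\infty}(a-it)^{-s}e^{-2\pi nt}\,dt=e^{2\pi ina}(2\pi n)^{s-1}e^{i\pi s/2}\,\Gamma(1-s,2\pi ina),
\end{equation*}
and its complex conjugate for the plus sign. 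Recognizing the sum over $n$ of these incomplete gamma expressions as the special case of the Lommel function $S_{\mu,\nu}$ is the key structural step and the conceptual novelty.

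The extraction of \eqref{fehur} then splits in two. Replacing $\Gamma(1-s,\pm2\pi ina)$ by the complete $\Gamma(1-s)$ produces the combination $e^{i(2\pi na+\pi s/2)}-e^{-i(2\pi na+\pi s/2)}=2i\sin\left(2\pi na+\tfrac12\pi s\right)$, and gathering the prefactor $-i$, the factor $(2\pi n)^{s-1}=n^{s-1}(2\pi)^{s-1}$, and the sum over $n$ reproduces \emph{exactly} the right-hand side of \eqref{fehur}, with the coefficients $\sin(\tfrac12\pi s)$ and $\cos(\tfrac12\pi s)$ emerging from $\sin\left(2\pi na+\tfrac12\pi s\right)$. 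It then remains to show that the complementary pieces, namely the lower incomplete gamma remainders summed over $n$, cancel the elementary terms $\tfrac12a^{-s}+a^{1-s}/(s-1)$. This is where I would invoke the series representation of $S_{\mu,\nu}$ together with its decomposition into the $s_{\mu,\nu}$ part and a Bessel-function part: the algebraic (non-oscillatory) contribution is what supplies those two terms term for term.

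The condition $\Re(s)<0$ enters twice and should be tracked: it is needed to justify the term-by-term integration and, more essentially, for the convergence of $\sum_{n=1}^{\infty}n^{s-1}e^{\pm2\pi ina}$, which is what makes the Fourier series meaningful; the stated extension to $\Re(s)<1$ for $0<a<1$ then follows by analytic continuation in $s$, as in the cited \cite{apostol-1998a}. I expect the main obstacle to be precisely the Lommel-function step of the third paragraph: correctly identifying the special case of $S_{\mu,\nu}$ hidden in $\sum_{n}(2\pi n)^{s-1}\Gamma(1-s,2\pi ina)$, controlling its analytic continuation, and verifying that its non-oscillatory part matches $\tfrac12a^{-s}+a^{1-s}/(s-1)$. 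The interchange of summation and integration and the conditional convergence of the resulting series near $a=1$ are secondary but still require care.
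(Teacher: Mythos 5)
Your outline reproduces the architecture of the paper's own proof: start from Hermite's formula \eqref{herm}, expand $1/(e^{2\pi x}-1)$ and integrate termwise, recognize each resulting integral as the special Lommel function $S_{-s-\frac{1}{2},\frac{1}{2}}$, let its oscillatory part generate the right-hand side of \eqref{fehur}, and make the non-oscillatory part cancel $\frac{1}{2}a^{-s}+\frac{a^{1-s}}{s-1}$. Within that outline, your contour rotation giving $\int_0^\infty(a-it)^{-s}e^{-2\pi nt}\,dt=e^{2\pi ina}(2\pi n)^{s-1}e^{i\pi s/2}\Gamma(1-s,2\pi ina)$ is correct and is a legitimate substitute for the paper's Lemma \ref{lommel}, which derives the same termwise evaluation by Mellin--Parseval and a residue computation; splitting $\Gamma(1-s,z)=\Gamma(1-s)-\gamma(1-s,z)$ then corresponds exactly to the paper's splitting \eqref{lommeldef1} of $S_{\mu,\nu}$ into a Bessel part plus $s_{\mu,\nu}$, and your complete-gamma computation does yield the right-hand side of \eqref{fehur}.

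The genuine gap is the cancellation step, and your description of it --- that the non-oscillatory contribution ``supplies those two terms term for term'' --- is not merely vague but structurally wrong. For each fixed $n$, the lower-incomplete-gamma remainder is exactly the $s_{\mu,\nu}$ piece: comparing your evaluation with Lemma \ref{lommel} and using \eqref{small lommel},
\begin{align*}
&-\frac{(2\pi n)^{s-1}}{i}\left[e^{2\pi ina+\frac{1}{2}i\pi s}\gamma(1-s,2\pi ina)-e^{-2\pi ina-\frac{1}{2}i\pi s}\gamma(1-s,-2\pi ina)\right]\\
&\qquad=2s\sqrt{a}\,(2\pi n)^{s-\frac{1}{2}}s_{-s-\frac{1}{2},\frac{1}{2}}(2\pi na)
=\frac{2a^{1-s}}{s-1}\,{}_1F_2\left(1;1-\frac{s}{2},\frac{3-s}{2};-\pi^2n^2a^2\right).
\end{align*}
This is $a^{1-s}$ times an even power series in $n$: no term contains the power $a^{-s}$ at all, and the constant term $\frac{2a^{1-s}}{s-1}$ alone, summed over $n$, diverges; more generally, interchanging $\sum_n$ with the ${}_1F_2$ series produces $\sum_n n^{2j}$, divergent for every $j\ge 0$. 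So the two elementary terms cannot emerge termwise; they arise only from a closed-form evaluation of the entire Schl\"omilch-type series $\sum_{n\ge 1}n^{s-\frac{1}{2}}s_{-s-\frac{1}{2},\frac{1}{2}}(2\pi na)$, carried out by analytic continuation rather than term-by-term manipulation. This is precisely the external input the paper imports from Ma\v{s}irevi\'c's theorem \eqref{mas result}, in which the anomalous $\pi/x$ term produces $\frac{1}{2}a^{-s}$ and the regularized value $\zeta(0)=-\frac{1}{2}$ produces $\frac{a^{1-s}}{s-1}$; your proposal offers no substitute for it, and trying to fill it with Fourier or Lipschitz-type summation would risk circularity, since those identities are essentially equivalent to \eqref{fehur} itself. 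Note also that \eqref{mas result} requires $x=2\pi a\in(0,2\pi)$, so the endpoint $a=1$ needs a separate argument (the paper's Case 2, via \eqref{mas result 2} and the recurrence \eqref{lommelcontiguous}); your proposal treats $0<a\le 1$ uniformly and would silently fail there.
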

We note that when $a=1$, the above formula reduces to the functional equation of $\zeta(s)$ \cite[p.~13, Equation (2.1.1)]{titch} for $\textup{Re}(s)<0$. which can then be seen to be true for all $s\in\mathbb{C}$ by analytic continuation.

Several proofs of \eqref{fehur} are available in the literature. For example, Hurwitz himself obtained it by transforming the Mellin transform representation of $\zeta(s, a)$ as a loop integral and then evaluating the latter. This proof can be found, for example, in \cite[p.~37]{titch}. Berndt \cite[Section 5]{berndtrocky1972} found a short proof of \eqref{fehur} by using the boundedly convergent Fourier series of $\lfloor x\rfloor-x+\tfrac{1}{2}$. We refer the reader interested in knowing the various proofs of this formula to \cite{ktty} and the references therein (see also \cite{kanetsuk}). In \cite[Section 4]{ktty}, Kanemitsu, Tanigawa, Tsukada and Yoshimoto obtained a new proof of \eqref{fehur}. Their proof commences with employing \cite[Equation (4.1)]{ktty} (see also \cite[Equation (47)]{kanechaktsuk})
\begin{align*}
\zeta(s, a)&=\frac{1}{2}a^{-s}+\frac{a^{1-s}}{s-1}\nonumber\\
&\quad+\sum_{n=1}^{\infty}\left\{\frac{e^{-2\pi ina}}{(-2\pi ina)^{1-s}}\G(1-s, -2\pi ina)+\frac{e^{2\pi ina}}{(2\pi ina)^{1-s}}\G(1-s, 2\pi ina)\right\},
\end{align*}
which is a special case of the Ueno-Nishizawa formula \cite{uenonishizawa} and then invoking the Fourier series of the Dirac-delta function $\delta(s)$. 

The aim of this note is to give a yet another new proof of \eqref{fehur} beginning with Hermite's well-known formula for $\zeta(s, a)$ \cite[p. 609, Formula 25.11.29]{nist}, valid for $\mathrm{Re}(a)>0$ and $s\neq 1$:
\begin{align}\label{herm}
\zeta(s,a)=\frac{1}{2}a^{-s}+\frac{a^{1-s}}{s-1}+2\int_0^\infty\frac{\sin(s\tan^{-1}(x/a))\, dx}{(a^2+x^2)^{s/2}\left(e^{2\pi x}-1\right)}.
\end{align}
The novelty of this proof is that it reveals the connection between Hurwitz zeta function and the Lommel functions $s_{\mu, \nu}(z)$ and $S_{\mu, \nu}(z)$ which, to the best of our knowledge, seems to have been unnoticed before. The Lommel functions are defined by \cite[p.~346, equation (10)]{Watson}
\begin{align}\label{small lommel}
s_{\mu,\nu}(z)=\frac{z^{\mu+1}}{(\mu-\nu+1)(\mu+\nu+1)}{}_1F_2\left(1;\frac{1}{2}\mu-\frac{1}{2}\nu+\frac{3}{2},\frac{1}{2}\mu+\frac{1}{2}\nu+\frac{3}{2};-\frac{1}{4}z^2\right).
\end{align}
and \cite[p.~347, equation (2)]{Watson}
\begin{align}\label{lommeldef1}
S_{\mu,\nu}(z)&=s_{\mu,\nu}(z)+\frac{2^{\mu-1}\G\left(\frac{\mu-\nu+1}{2}\right)
\G\left(\frac{\mu+\nu+1}{2}\right)}{\sin(\nu\pi)}\\
&\quad\quad\quad\quad\quad\times\left\{\cos\left(\frac{1}{2}(\mu-\nu)\pi\right)
J_{-\nu}(z)-\cos\left(\frac{1}{2}(\mu+\nu)\pi\right)J_{\nu}(z)\right\}\notag
\end{align}
for $\nu\notin\mathbb{Z}$, and
\begin{align}\label{lommeldefint}
S_{\mu,\nu}(z)&=s_{\mu,\nu}(z)+2^{\mu-1}\G\left(\frac{\mu-\nu+1}{2}\right)\G\left(\frac{\mu+\nu+1}{2}\right)\\
&\quad\quad\quad\quad\quad\times\left\{\sin\left(\frac{1}{2}(\mu-\nu)\pi\right)
J_{\nu}(z)-\cos\left(\frac{1}{2}(\mu-\nu)\pi\right)Y_{\nu}(z)\right\}\notag
\end{align}
for $\nu\in\mathbb{Z}$, where $J_{\nu}(z)$ and $Y_{\nu}(z)$ are Bessel functions of the first and second kinds respectively. The Lommel functions are the solutions of an inhomogeneous form of the Bessel differential equation \cite[p.~345]{Watson}, namely,
\begin{equation*}
z^2\frac{d^{2}y}{dz^{2}}+z\frac{dy}{dz}+(z^2-\nu^2)y=z^{\mu+1}.
\end{equation*}
Lommel functions arise in mathematics, for example, in the theory of positive trigonometric sums\cite{koulam}. Outside of mathematics, Lommel functions have been found to be very useful in physics as well as mathematical physics. See, for example, \cite{assjsv, goldstein, sitzer, thomaschan}. Lewis \cite{lewisinvent} studied a special case of $S_{\mu, \nu}(z)$, that is,
\begin{equation}\label{mathcalc}
\mathcal{C}_{s}(z)=\sqrt{z}\Gamma(2s+1)S_{-2s-\frac{1}{2},\frac{1}{2}}(z),
\end{equation}
and represented it in terms of the incomplete gamma function. Lewis and Zagier \cite{lewzag} represented the period functions for Maass wave forms with spectral parameter $s$ in terms of an infinite series of $\mathcal{C}_s(z)$, and in the course of which they gave different representations for this special case of the Lommel function. See \cite[p.~214, Proposition 1]{lewzag}.

In the present work, we require a new integral representation for this special case of the Lommel function $S_{\mu, \nu}(z)$ which, to the best of our knowledge, does not seem to have been explicitly stated anywhere including \cite{lewzag}. This is derived in Lemma \ref{lommel}. 

Another ingredient needed in our proof of \eqref{fehur} is a recent result of Maširević \cite[Theorem 2.1]{mas} (see also \cite[p.~176, Theorem 5.23]{barmaspog}) which states that for all $m\in\mathbb{N}\cup\{0\},\ \nu\in\mathbb{R},\ x\in(0,2\pi)$ and $\mu>\max\left\{-\nu-1,\nu-2,-\frac{1}{2}\right\}$,
\begin{align}\label{mas result}
\sum_{k=1}^\infty\frac{s_{\mu,\nu}(kx)}{k^{2m+\mu+1}}&=\frac{x^{\mu+1}}{4}\Gamma\left(\frac{1+\mu-\nu}{2}\right)\Gamma\left(\frac{1+\mu+\nu}{2}\right)\nonumber\\
&\quad\times\Bigg(\frac{(-1)^m\pi}{2\Gamma(m+1+(\mu-\nu)/2)\Gamma(m+1+(\mu+\nu)/2)}\left(\frac{x}{2}\right)^{2m-1}\nonumber\\
&\qquad+\sum_{n=0}^m\frac{(-1)^n\zeta(2m-2n)}{\Gamma(n+1+(1+\mu-\nu)/2)\Gamma(n+1+(1+\mu+\nu)/2)}\left(\frac{x}{2}\right)^{2n}\Bigg).
\end{align}

\section{A new proof of Hurwitz's formula using Hermite's formula \eqref{herm}}\label{newproof}
Here we prove Theorem \ref{fehurthm}. To do that, however, we first need a lemma which evaluates an integral in terms of the Lommel function $S_{\mu, \nu}(z)$. This lemma seems to be new.
\begin{lemma}\label{lommel}
Let the Lommel function $S_{\mu, \nu}(z)$ be defined in \eqref{lommeldef1} and \eqref{lommeldefint}. For $k\in\mathbb{N}$ and $a>0$, we have 
\begin{align}\label{lommeleqn}
\int_0^\infty \frac{e^{-2\pi kx}\sin(s\tan^{-1}(x/a))}{(a^2+x^2)^{s/2}}\ dx=s\sqrt{a}(2\pi k)^{s-\frac{1}{2}}S_{-s-\frac{1}{2}, \frac{1}{2}}(2\pi a k).
\end{align}
\end{lemma}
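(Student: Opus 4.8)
The plan is to reduce the left-hand side to incomplete gamma functions and then recognize the result as the stated Lommel function. First I would put the integrand into polar form: since $a\pm ix=\sqrt{a^2+x^2}\,e^{\pm i\tan^{-1}(x/a)}$ for $a>0$, one has
\begin{equation*}
\frac{\sin(s\tan^{-1}(x/a))}{(a^2+x^2)^{s/2}}=\frac{1}{2i}\left[(a-ix)^{-s}-(a+ix)^{-s}\right],
\end{equation*}
so that the integral splits as $I=\tfrac{1}{2i}(J_--J_+)$, where $J_\pm=\int_0^\infty e^{-2\pi kx}(a\pm ix)^{-s}\,dx$. Both pieces converge for every $s$ thanks to the factor $e^{-2\pi kx}$.

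Next I would evaluate $J_\pm$ in closed form. Factoring $a+ix=i(x-ia)$ and $a-ix=-i(x+ia)$ and appealing to the standard Laplace-type evaluation $\int_0^\infty e^{-pt}(t+c)^{-s}\,dt=e^{pc}p^{s-1}\Gamma(1-s,pc)$ (valid by analytic continuation in the complex parameter $c$), together with $i^{-s}=e^{-i\pi s/2}$ and $(-i)^{-s}=e^{i\pi s/2}$, I obtain, on writing $z=2\pi ak$,
\begin{align*}
I=\frac{(2\pi k)^{s-1}}{2i}\left[e^{i\pi s/2}e^{iz}\Gamma(1-s,iz)-e^{-i\pi s/2}e^{-iz}\Gamma(1-s,-iz)\right].
\end{align*}
Since $\sqrt{a}\,(2\pi k)^{s-1/2}=(2\pi k)^{s-1}\sqrt{z}$, the lemma is thus equivalent to the identity
\begin{equation*}
\frac{1}{2i}\left[e^{i\pi s/2}e^{iz}\Gamma(1-s,iz)-e^{-i\pi s/2}e^{-iz}\Gamma(1-s,-iz)\right]=s\sqrt{z}\,S_{-s-\frac{1}{2},\frac{1}{2}}(z).
\end{equation*}

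To establish this identity I would expand each incomplete gamma function through $\Gamma(1-s,w)=\Gamma(1-s)-\gamma(1-s,w)$ and the confluent-hypergeometric form $\gamma(1-s,w)=\frac{w^{1-s}}{1-s}e^{-w}{}_1F_1(1;2-s;w)$. The $\Gamma(1-s)$ terms combine into $\Gamma(1-s)\sin\left(z+\tfrac{1}{2}\pi s\right)$, while the two ${}_1F_1$ contributions, after using $(\pm iz)^{1-s}e^{\pm i\pi s/2}=\pm i\,z^{1-s}$ and retaining only the even powers, collapse into a single ${}_1F_2\left(1;1-\tfrac{s}{2},\tfrac{3-s}{2};-\tfrac{z^2}{4}\right)$. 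Comparing with the series \eqref{small lommel} for $s_{-s-\frac{1}{2},\frac{1}{2}}(z)$, whose two lower parameters specialize to exactly $1-\tfrac{s}{2}$ and $\tfrac{3-s}{2}$, shows that the ${}_1F_1$ part equals $s\sqrt{z}\,s_{-s-\frac{1}{2},\frac{1}{2}}(z)$, i.e. the small-Lommel piece of the right-hand side.

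It then remains to match the elementary term $\Gamma(1-s)\sin\left(z+\tfrac{1}{2}\pi s\right)$ against the Bessel part of $S_{-s-\frac{1}{2},\frac{1}{2}}(z)$. Evaluating \eqref{lommeldef1} at $\mu=-s-\tfrac{1}{2}$, $\nu=\tfrac{1}{2}$ with $J_{\pm1/2}$ written in closed trigonometric form produces a single multiple of $\sin\left(z+\tfrac{1}{2}\pi s\right)$, and the two coefficients agree precisely by the Legendre duplication formula $\Gamma\left(\tfrac{1-s}{2}\right)\Gamma\left(1-\tfrac{s}{2}\right)=2^{s}\sqrt{\pi}\,\Gamma(1-s)$ combined with $\Gamma\left(1-\tfrac{s}{2}\right)=-\tfrac{s}{2}\Gamma\left(-\tfrac{s}{2}\right)$; this completes the identification, and since both sides are analytic in $s$, verifying it on a suitable half-plane extends it to all admissible $s$. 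I expect the main obstacle to lie not in this hypergeometric bookkeeping, which is mechanical once the duplication formula is in hand, but in the passage from $J_\pm$ to the incomplete gamma functions: one must justify the closed-form Laplace evaluation for complex $c$ and fix the principal branches of $(\pm iz)^{1-s}$ consistently so that the two terms recombine correctly.
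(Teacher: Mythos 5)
Your proposal is correct, and it takes a genuinely different route from the paper. The paper works entirely on the Mellin side: it pairs the Mellin transforms of $e^{-2\pi kx}$ and of $\sin(s\tan^{-1}(x/a))(a^2+x^2)^{-s/2}$ via Parseval's identity, obtaining a Mellin--Barnes integral $I_s(z)$, which it then evaluates for $|z|<1$ by shifting the contour past two families of simple poles (at $\xi=-s-m$ and $\xi=-2n-1$), using Stirling estimates to discard the shifted integral; the two residue series are exactly the trigonometric term and the ${}_1F_2$ that together constitute $S_{-s-\frac12,\frac12}$. You instead split the integrand elementarily as $\frac{1}{2i}\left[(a-ix)^{-s}-(a+ix)^{-s}\right]$, evaluate the two Laplace integrals as incomplete gamma functions $\Gamma(1-s,\pm iz)$, and then reassemble: the $\Gamma(1-s)$ pieces give $\Gamma(1-s)\sin\left(z+\tfrac{\pi s}{2}\right)$, which matches the Bessel part of \eqref{lommeldef1} (via $J_{\pm 1/2}$ and the duplication formula), while the $\gamma(1-s,\pm iz)$ pieces, through ${}_1F_1(1;2-s;\pm iz)$ and the Pochhammer identity $(2-s)_{2m}=4^m\left(1-\tfrac s2\right)_m\left(\tfrac{3-s}{2}\right)_m$, collapse to the ${}_1F_2$ in \eqref{small lommel}; I checked the constants and signs on both matches and they work out. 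Your route is more elementary (no Barnes-integral asymptotics or contour shifting) and has the conceptual merit of passing through the incomplete-gamma representation, i.e.\ essentially the Lewis--Zagier representation of $\mathcal{C}_s(z)$ and the Ueno--Nishizawa decomposition that the paper cites in its introduction; the paper's route, by contrast, yields the evaluation \eqref{a12} of a Barnes-type integral as a byproduct and avoids all branch bookkeeping. The two caveats you flagged are the right ones, and both are genuinely needed but standard: the Laplace evaluation with purely imaginary shift $c=\mp ia$ requires a contour rotation (legitimate since $\operatorname{Re}(p)=2\pi k>0$ and $t\pm ia$ stays away from the branch cut for $t\ge 0$), and your principal-branch identities $(\pm iz)^{1-s}e^{\pm i\pi s/2}=\pm iz^{1-s}$ are correct for $z>0$. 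One point you should state explicitly: splitting $\Gamma(1-s,w)=\Gamma(1-s)-\gamma(1-s,w)$ breaks down at $s=1,2,3,\dots$ (poles of both terms), so your identity is first proved for $s$ off the positive integers and then extended; this is the same removable-singularity/analytic-continuation step the paper performs at the end of its proof.
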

\begin{proof}
We first prove the above result for Re$(s)<0$ and then extend it to all $s\in\mathbb{C}$ by analytic continuation.

Using the inverse Mellin transform representation of the exponential function, for $c_1:=\mathrm{Re}(\xi)>0$ and $k>0$,
\begin{align}\label{mt 1}
\frac{1}{2\pi i}\int_{(c_1)}\frac{\Gamma(\xi)}{(2\pi k)^\xi}y^{-\xi}\ d\xi=e^{-2\pi ky},
\end{align}
where here, and throughout the paper, $\int_{(d)}$ will always mean the integral $\int_{d-i\infty}^{d+i\infty}$.

Also, from \cite[p.~193, Formula 5.19]{ober}, for $-1<c_2:=\mathrm{Re}(\xi)<\mathrm{Re}(s)$,
\begin{align}\label{mt 2}
\frac{1}{2\pi i}\int_{(c_2)}\frac{\Gamma(s-\xi)\Gamma(\xi)}{\Gamma(s)}\sin\left(\frac{\pi \xi}{2}\right)a^{\xi-s}y^{-\xi}\ d\xi=\frac{\sin\left(s\tan^{-1}\left(\frac{y}{a}\right)\right)}{(y^2+a^2)^{\frac{s}{2}}}.
\end{align}
From \eqref{mt 1}, \eqref{mt 2} and Parseval's identity \cite[p.~82, Equation (3.1.11)]{kp}
\begin{equation*}
\int_{0}^{\infty}g(x)h(x)\, dx=\frac{1}{2\pi i}\int_{(c)}\mathfrak{G}(1-s)\mathfrak{H}(s)\, ds,
\end{equation*}
where $\mathfrak{G}$ and $\mathfrak{H}$ are Mellin transforms of $g$ and $h$ respectively, for $c:=\mathrm{Re}(\xi)<\mathrm{Re}(s)$ and $-1<\mathrm{Re}(\xi)<1$,
{\allowdisplaybreaks\begin{align*}
2\int_0^\infty \frac{e^{-2\pi kx}\sin(s\tan^{-1}(x/a))}{(a^2+x^2)^{s/2}}\ dx&=\frac{2}{2\pi i}\int_{(c)}\frac{\Gamma(\xi)\Gamma(1-\xi)\Gamma(s-\xi)}{(2\pi k)^{1-\xi}\Gamma(s)}\sin\left(\frac{\pi\xi}{2}\right)a^{\xi-s}\ d\xi\nonumber\\
&=\frac{a^{-s}}{\Gamma(s)}\frac{1}{2\pi i}\int_{(c)}\frac{\pi\Gamma(s-\xi)}{\cos\left(\frac{\pi\xi}{2}\right)}(2\pi k)^{\xi-1}a^{\xi}\ d\xi\nonumber\\
&=\frac{a^{-s}}{\Gamma(s)}\frac{1}{2\pi i}\int_{(c)}\G\left(\frac{1+\xi}{2}\right)\G\left(\frac{1-\xi}{2}\right)\Gamma(s-\xi)(2\pi k)^{\xi-1}a^{\xi}\ d\xi,
\end{align*}}
where we used the reflection formula for the gamma function. Replacing $\xi$ by $-\xi$, we see that for $c:=\mathrm{Re}(\xi)>-\mathrm{Re}(s)$ and $-1<\mathrm{Re}(\xi)<1$,
\begin{align}\label{inter}
&2\int_0^\infty \frac{e^{-2\pi kx}\sin(s\tan^{-1}(x/a))}{(a^2+x^2)^{s/2}}\ dx=\frac{a^{-s}}{2\pi k\Gamma(s)}\frac{1}{2\pi i}\int_{(c)}\G\left(\frac{1+\xi}{2}\right)\G\left(\frac{1-\xi}{2}\right)\Gamma(s+\xi)(2\pi ak)^{-\xi}\ d\xi.
\end{align}
To evaluate the integral on the right-hand side of the above equation, first consider
\begin{align}\label{a110}
I_s(z):=\frac{1}{2\pi i}\int_{(c)}\G\left(\frac{1+\xi}{2}\right)\G\left(\frac{1-\xi}{2}\right)\Gamma(s+\xi)z^{-\xi}\ d\xi,
\end{align}
where $c=\textup{Re}(\xi)$. 

We evaluate the above integral first for $|z|<1, z\notin(-1,0]$ and then extend it later to all $z\in\mathbb{C}\backslash(-\infty, 0]$ by analytic continuation. Consider the contour formed by the line segments $[c-iT,c+iT],\ [c+iT,-\lambda+iT],\ [-\lambda+iT,-\lambda-iT]$ and $[-\lambda-iT,c-iT]$, where $\lambda\notin\mathbb{Z}, \l>1$. Observe that the integrand on the right-hand side of \eqref{a110} has simple poles at $\xi=-s-m, 0\leq m\leq\lfloor\l-\textup{Re}(s)\rfloor,$ and $\xi=-2n-1$, where $0\leq n\leq\lfloor\frac{\lambda-1}{2}\rfloor$ due to $\Gamma(s-\xi)$ and $\Gamma\left(\frac{1+\xi}{2}\right)$ respectively. (Note that since Re$(s)<0$, there will not be any pole of order $2$.) The residues of the integrand at these poles can be easily calculated to be $\frac{(-1)^mz^{m+s}\pi}{m!\cos\left(\frac{\pi}{2}(m+s)\right)}$ and $2(-1)^nz^{2n+1}\Gamma(-1-2n+s)$ respectively. By employing Stirling's formula in a vertical strip $\alpha\leq c\leq\beta$ \cite[p.~141, Formula \textbf{5.11.9}]{nist}, namely,
\begin{equation}\label{strivert}
|\Gamma(c+iT)|=(2\pi)^{\tf{1}{2}}|T|^{c-\tf{1}{2}}e^{-\tf{1}{2}\pi |T|}\left(1+O\left(\frac{1}{|T|}\right)\right),
\end{equation}
as $|T|\to\infty$, we see that the integrals along horizontal segments go to $0$ as $T\to\infty$. Hence by Cauchy's residue theorem, we obtain 
\begin{align}\label{cauchy}
I_{s}(z)&=\pi z^s\sum_{m=0}^{\lfloor\lambda-\textup{Re}(s)\rfloor} \frac{(-z)^m}{m!\cos\left(\frac{\pi}{2}(m+s)\right)}+2z\sum_{n=0}^{\lfloor\frac{\lambda-1}{2}\rfloor}(-1)^nz^{2n}\Gamma(-1-2n+s)\nonumber\\
&\quad+\frac{1}{2\pi i}\int_{(-\lambda)}\Gamma\left(\frac{1+\xi}{2}\right)\Gamma\left(\frac{1-\xi}{2}\right)\Gamma(s+\xi)z^{-\xi}\ d\xi.
\end{align}  
Next, we show that as $\lambda\rightarrow\infty$,
\begin{align}\label{integral zero}
\frac{1}{2\pi i}\int_{(-\lambda)}\Gamma\left(\frac{1+\xi}{2}\right)\Gamma\left(\frac{1-\xi}{2}\right)\Gamma(s+\xi)z^{-\xi}\ d\xi\rightarrow 0.
\end{align}
By \eqref{strivert}, we find that as $|t|\rightarrow\infty$,
\begin{align}\label{gamma1}
\left|\Gamma\left(\frac{1\pm\lambda-it}{2}\right)\right|=O_{\l}\left(|t|^{\pm\lambda/2}e^{-\frac{\pi}{4}|t|}\right), 
\end{align}
and
\begin{align}\label{gamma3}
|\Gamma(s-\lambda+it)|=O\left(|(t+\mathrm{Im}(s))|^{-\lambda+\mathrm{Re}(s)-1/2}e^{-\frac{\pi}{2}|(t+\mathrm{Im}(s))|}\right).
\end{align}
Upon making change of variable $\xi=-\lambda+it$ and then using \eqref{gamma1} and \eqref{gamma3}, we see that
\begin{align*}
&\left|\int_{(-\lambda)}\Gamma\left(\frac{1+\xi}{2}\right)\Gamma\left(\frac{1-\xi}{2}\right)\Gamma(s+\xi)z^{-\xi}\ d\xi\right|\nonumber\\
&=\left|\frac{1}{i}\int_{-\infty}^\infty\Gamma\left(\frac{1-\lambda+it}{2}\right)\Gamma\left(\frac{1+\lambda-it}{2}\right)\Gamma(s-\lambda+it)z^{\lambda-it}\ dt\right|\nonumber\\
&=|z|^\lambda \int_{-M}^MO(1)\ dt+|z|^\lambda\int_{|t|\geq M}O\left(\left|\left(t+\mathrm{Im}(s)\right)\right|^{-\lambda+\mathrm{Re}(s)-1/2}e^{-\frac{\pi}{2}|t|-\frac{\pi}{2}|(t+\mathrm{Im}(s))|}\right)\ dt\nonumber\\
&=O(|z|^\lambda),
\end{align*}
where $M$ is a large enough positive real number. Since $|z|<1$, as $\lambda\rightarrow\infty$, we arrive at \eqref{integral zero}. Therefore by \eqref{cauchy} and \eqref{integral zero}, for $|z|<1, z\notin(-1,0]$,
\begin{align}\label{a12}
I_{s}(z)
=\pi z^s\sum_{m=0}^{\infty} \frac{(-z)^m}{m!\cos\left(\frac{\pi}{2}(m+s)\right)}+2z\sum_{n=0}^{\infty}(-1)^nz^{2n}\Gamma(-1-2n+s).
\end{align}
Now observe that both sides of above equation are analytic, as functions of $z$, in $\mathbb{C}\backslash(-\infty,0]$. Therefore \eqref{a12} holds for all $z\in\mathbb{C}\backslash(-\infty, 0]$ by analytic continuation. Hence letting $z=2\pi ak, a>0, k\in\mathbb{N}$, in \eqref{a12} and simplifying the resulting first sum by splitting it into two sums, one over even $m$ and other over odd $m$ and by rephrasing the resulting second sum into a ${}_1F_{2}$ using the reflection and duplication formulas of the gamma function, we obtain
\begin{align}\label{a13}
I_{s}(2\pi a k)
=\frac{2\pi(2\pi ak)^{s}}{\sin(\pi s)}\sin\left(\frac{\pi}{2}(4ak+s)\right)+4\pi ak\Gamma(s-1){}_1F_2\left(1;1-\frac{s}{2},\frac{3-s}{2};-a^2\pi^2k^2\right).
\end{align}
Equation \eqref{lommeleqn} now follows from \eqref{small lommel}, \eqref{lommeldef1}, \eqref{inter}, \eqref{a110} and \eqref{a13}. This completes the proof for Re$(s)<0$. Now using an argument similar to that in \cite[p.~269-270]{whitwat}, one can show that the left-hand side of \eqref{lommeleqn} is an entire function of $s$. The right-hand side of \eqref{lommeleqn} is also analytic in the whole $s$-complex plane except for possible poles at $s=1, 2, 3\cdots$. However, as shown in \cite[p.~347-349]{Watson}, the Lommel function $S_{\mu, \nu}(z)$ has a limit when $\mu+\nu$ or $\mu-\nu$ are odd negative integers. Since the positive integer values of $s$ render the $\mu+\nu$ and $\mu-\nu$ of our special case of the Lommel function, namely, $S_{-s-\frac{1}{2}, \frac{1}{2}}(2\pi a k)$, to fall precisely in this category, we see that $s=1, 2, 3, \cdots$ are indeed removable singularities of the right-hand side. Therefore both sides of \eqref{lommeleqn} are entire functions of $s$, and hence the equality follows for all $s\in\mathbb{C}$ by analytic continuation.
\end{proof}
We are now ready to prove Theorem \ref{fehurthm}.

\begin{proof}[Theorem \textup{\ref{fehurthm}}][]
The proof is divided into two cases.\\

\noindent
\textbf{Case 1:} $0<a<1$.

\noindent
We first prove the result for $s<0$ and then extend it by analytic continuation. For $a>0$ and $s\neq 1$, from \eqref{herm},
\begin{align}\label{above}
\zeta(s,a)
=\frac{1}{2}a^{-s}+\frac{a^{1-s}}{s-1}+2\sum_{k=1}^\infty\int_0^\infty \frac{e^{-2\pi kx}\sin(s\tan^{-1}(x/a))}{(a^2+x^2)^{s/2}}\ dx,
\end{align}
where the interchange of the order of summation and integration is justified by absolute and uniform convergence (see, for example, \cite[p.~30, Theorem 2.1]{temme}). Invoking Lemma \ref{lommel} in \eqref{above}, we have, for $a>0$ and $s\neq 1$,
\begin{align}\label{befmas}
\zeta(s, a)&=\frac{1}{2}a^{-s}+\frac{a^{1-s}}{s-1}+2s\sqrt{a}(2\pi)^{s-\frac{1}{2}}\sum_{k=1}^\infty\frac{S_{-s-\frac{1}{2}, \frac{1}{2}}(2\pi a k)}{k^{\frac{1}{2}-s}}\nonumber\\
&=\frac{1}{2}a^{-s}+\frac{a^{1-s}}{s-1}+\frac{1}{\G(s)}\sum_{k=1}^\infty\bigg\{2\sqrt{a}(2\pi)^{s-\frac{1}{2}}\G(s+1)\frac{s_{-s-\frac{1}{2}, \frac{1}{2}}(2\pi a k)}{k^{\frac{1}{2}-s}}\nonumber\\
&\qquad\qquad\qquad\qquad\qquad\qquad\quad+\frac{(2\pi)^{s}}{\sin(\pi s)}\frac{\sin\left(\frac{\pi}{2}(4ak+s)\right)}{k^{1-s}} \bigg\},
\end{align}
where in the second step, we used \eqref{lommeldef1}. This is the first instance where the infinite series on the right-hand side of \eqref{fehur} makes its conspicuous presence.

Now let $m=0,\ \mu=-s-\frac{1}{2},\ \nu=\frac{1}{2}$ and $x=2\pi a$ in \eqref{mas result} and then use $\zeta(0)=-1/2$ to deduce that for $s<0$ and $0<a<1$,
\begin{align*}
\sum_{k=1}^\infty\frac{s_{-s-\frac{1}{2}, \frac{1}{2}}(2\pi a k)}{k^{\frac{1}{2}-s}}&=\frac{(2\pi a)^{\frac{1}{2}-s}}{4}\Gamma\left(-\frac{s}{2}\right)\Gamma\left(\frac{1-s}{2}\right)\left(\frac{\pi}{2\pi a\Gamma\left(\frac{1-s}{2}\right)\Gamma\left(1-\frac{s}{2}\right)}+\frac{-1/2}{\Gamma\left(1-\frac{s}{2}\right)\Gamma\left(\frac{3-s}{2}\right)}\right)\nonumber\\
&=\frac{(2\pi a)^{\frac{1}{2}-s}}{4}\frac{\sqrt{\pi}\Gamma(-s)}{2^{-s-1}}\left(\frac{2^{-s}}{2a\sqrt{\pi}\Gamma(1-s)}-\frac{2^{1-s}}{2\sqrt{\pi}\Gamma(2-s)}\right),
\end{align*}
where in the last step, we used the duplication formula for the gamma function twice. Thus,
\begin{align}\label{masi}
\sum_{k=1}^\infty\frac{s_{-s-\frac{1}{2}, \frac{1}{2}}(2\pi a k)}{k^{\frac{1}{2}-s}}=\frac{-1}{2s\sqrt{a}(2\pi)^{s-\frac{1}{2}}}\left(\frac{1}{2}a^{-s}+\frac{a^{1-s}}{s-1}\right).
\end{align}
Hence from \eqref{befmas} and \eqref{masi},
\begin{align*}
\zeta(s, a)= \frac{(2\pi)^{s}}{\G(s)\sin(\pi s)}\sum_{k=1}^\infty\frac{\sin\left(\frac{\pi}{2}(4ak+s)\right)}{k^{1-s}},
\end{align*}
which readily gives \eqref{fehur}. This completes the proof of Theorem \ref{fehurthm} for $s<0$. Since both sides of \eqref{fehur} are analytic for Re$(s)<0$, we conclude that \eqref{fehur} is valid for Re$(s)<0$.

If $0<a<1$, note that in addition to being absolutely convergent for Re$(s)<0$, the series on the right-hand side of \eqref{fehur} are conditionally convergent for $0<\textup{Re}(s)<1$ whence we see that for $0<a<1$, the result \eqref{fehur} actually holds for Re$(s)<1$.\\

\noindent
\textbf{Case 2:} $a=1$.

\noindent
We first prove the result for $s<-1$ and then extend it to all complex $s$ by analytic continuation.
From \eqref{befmas} and the fact that $\zeta(s, 1)=\zeta(s)$ for all $s\in\mathbb{C}$, we have
\begin{align}\label{befmas1}
\zeta(s)&=\frac{1}{2}+\frac{1}{s-1}+\frac{1}{\G(s)}\sum_{k=1}^\infty\bigg\{2(2\pi)^{s-\frac{1}{2}}\G(s+1)\frac{s_{-s-\frac{1}{2}, \frac{1}{2}}(2\pi k)}{k^{\frac{1}{2}-s}}
+\frac{(2\pi)^{s}}{\sin(\pi s)}\frac{\sin\left(\frac{\pi}{2}(4k+s)\right)}{k^{1-s}} \bigg\}.
\end{align}
We now wish to evaluate in closed form the series $\displaystyle\sum_{k=1}^\infty\frac{s_{-s-\frac{1}{2}, \frac{1}{2}}(2\pi k)}{k^{\frac{1}{2}-s}}$, which is indeed convergent for $s<-1$ since $\sum_{k=1}^{\infty}k^{s-1}\sin\left(\frac{\pi}{2}(4k+s)\right)$ converges for $s<-1$.

However, one should be careful as \eqref{mas result} cannot be applied here. This is because, it requires $x\in(0, 2\pi)$, whereas here we need $x$ in \eqref{mas result} to be $2\pi$. Thankfully, Maširević has also obtained the following result \cite[Theorem 2.2]{mas} where $0\leq x\leq 2\pi, m\in\mathbb{N}$ and $\mu>0$:
\begin{align}\label{mas result 2}
\sum_{k=1}^\infty\frac{s_{\mu-\frac{3}{2},\frac{1}{2}}(kx)}{k^{2m+\mu-\frac{1}{2}}}=\frac{1}{2}(-1)^{m-1}x^{2m+\mu-\frac{1}{2}}\G(\mu-1)\left(\frac{-\pi}{x\G(2m+\mu)}+2\sum_{n=0}^{m}\frac{(-1)^{n-1}\zeta(2n)}{\G(2m+\mu+1-2n)x^{2n}}\right).
\end{align}
Even though $x$ can equal $2\pi$ in the above result, note that $m$ has to be a natural number, whereas, to evaluate the series $\displaystyle\sum_{k=1}^\infty\frac{s_{-s-\frac{1}{2}, \frac{1}{2}}(2\pi k)}{k^{\frac{1}{2}-s}}$ using \eqref{mas result 2}, we would require $m=0$. To circumvent this problem, we first employ the well-known result \cite[p.~946, Formula \textbf{8.575.1}]{grn},
\begin{equation}\label{lommelcontiguous}
s_{\mu+2, \nu}(z)=z^{\mu+1}-[(\mu+1)^2-\nu^2]s_{\mu, \nu}(z).
\end{equation}
Use \eqref{lommelcontiguous} with $\mu=-s-5/2$, $\nu=1/2$ and $z=2\pi k$ so that
\begin{align}\label{modilommel0}
\sum_{k=1}^\infty\frac{s_{-s-\frac{1}{2}, \frac{1}{2}}(2\pi k)}{k^{\frac{1}{2}-s}}=\sum_{k=1}^{\infty}\frac{(2\pi k)^{-s-\frac{3}{2}}}{k^{\frac{1}{2}-s}}-\left\{\left(s+\frac{3}{2}\right)^2-\frac{1}{4}\right\}\sum_{k=1}^\infty\frac{s_{-s-\frac{5}{2}, \frac{1}{2}}(2\pi k)}{k^{\frac{1}{2}-s}}.
\end{align}
We now transform the second series on the right-hand side of the above equation using \eqref{mas result 2}. Before we do that, however, we need the well-known result $\zeta(0)=-1/2$, which can be proved \emph{without} using the functional equation of $\zeta(s)$ so that circular reasoning is avoided. For example, one can put $s=0$ in the following formula \cite[p.~14, Equation (2.1.4)]{titch} 
\begin{equation*}
\zeta(s)=s\int_{1}^{\infty}\frac{[x]-x+1/2}{x^{s+1}}\, dx+\frac{1}{s-1}+\frac{1}{2}\hspace{8mm}(\textup{Re}(s)>-1),
\end{equation*}
to conclude that $\zeta(0)=-1/2$.

Let $m=1$, $x=2\pi$ and $\mu=-s-1$ in \eqref{mas result 2} so that for $s<-1$,
\begin{align}\label{modilommel}
\sum_{k=1}^\infty\frac{s_{-s-\frac{5}{2}, \frac{1}{2}}(2\pi k)}{k^{\frac{1}{2}-s}}=\frac{1}{2}(2\pi)^{\frac{1}{2}-s}\G(-s-2)\left(-\frac{1}{2\G(1-s)}+\frac{1}{\G(2-s)}+\frac{1}{12\G(-s)}\right),
\end{align}
where we used $\zeta(0)=-1/2$ and $\zeta(2)=\sum_{k=1}^{\infty}\frac{1}{k^2}=\pi^2/6$.

Substitute \eqref{modilommel} in \eqref{modilommel0} and simplify using the functional equation of the gamma function to arrive at
\begin{align}\label{a=1}
\sum_{k=1}^\infty\frac{s_{-s-\frac{1}{2}, \frac{1}{2}}(2\pi k)}{k^{\frac{1}{2}-s}}=-\frac{1}{2s(2\pi)^{s-\frac{1}{2}}}\left(\frac{1}{2}+\frac{1}{s-1}\right).
\end{align}
Comparing the above equation with \eqref{masi}, we see that \eqref{masi} holds for $a=1$ too.

Using \eqref{a=1} in \eqref{befmas1}, we arrive at
\begin{equation*}
\zeta(s)=\frac{2\G(1-s)}{(2\pi)^{1-s}}\sin\left(\frac{\pi s}{2}\right)\zeta(1-s)
\end{equation*}
for $s<-1$. The result then follows for all complex $s$ by analytic continuation.
\end{proof}

\section{A modular-type transformation involving the Lommel function $S_{-s-\frac{1}{2}, \frac{1}{2}}(z)$}
Modular-type transformations are the ones governed by the map $\a\to\b$, where $\a\b=1$. An equivalent way to say this using the language of modular forms is that they consist of functions which transform nicely under $z\to-1/z$, $\textup{Im}(z)>0$. But they may not transform nicely under $z\to z+1$, hence the nomenclature \emph{modular-type} transformations. For a detailed survey on modular-type transformations, the reader is referred to \cite{dixmathstu}.

The following modular-type transformation involving infinite series of Hurwitz zeta function was obtained by the first author in \cite[Theorem 1.4]{dixit} as a generalization of a transformation of Ramanujan \cite[Theorem 1.1]{dixit} on page $220$ of the Lost Notebook \cite{lnb}.
\begin{theorem}\label{mainnn}
Let $0<$ \textup{Re}$(s)<2$. Define $\varphi(s,x)$ by
\begin{equation*}
\varphi(s,x):=\zeta(s,x)-\frac{1}{2}x^{-s}+\frac{x^{1-s}}{1-s}.
\end{equation*}
If $\alpha$ and $\beta$ are any positive numbers such that $\alpha\beta=1$,
\begin{align*}
&\a^{\frac{s}{2}}\left(\sum_{n=1}^{\infty}\varphi(s,n\a)-\frac{\zeta(s)}{2\alpha^{s}}-\frac{\zeta(s-1)}{(s-1)\alpha }\right)=\b^{\frac{s}{2}}\left(\sum_{n=1}^{\infty}\varphi(s,n\b)-\frac{\zeta(s)}{2\beta^{s}}-\frac{\zeta(s-1)}{(s-1)\beta}\right)\nonumber\\
&=\frac{8(4\pi)^{\frac{s-4}{2}}}{\Gamma(s)}\int_{0}^{\infty}\Gamma\left(\frac{s-2+it}{4}\right)\Gamma\left(\frac{s-2-it}{4}\right)
\Xi\left(\frac{t+i(s-1)}{2}\right)\nonumber\\
&\qquad\qquad\qquad\times\Xi\left(\frac{t-i(s-1)}{2}\right)\frac{\cos\left( \tf{1}{2}t\log\a\right)}{s^2+t^2}\, dt,
\end{align*}
where $\Xi(t):=\xi\left(\frac{1}{2}+it\right)$ with $\xi(s)=\frac{1}{2}s(s-1)\pi^{-\frac{1}{2}s}\Gamma(\tfrac{1}{2}s)\zeta(s)$.
\end{theorem}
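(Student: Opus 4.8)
The plan is to collapse both $\varphi$-sides of the identity to one and the same symmetric Mellin--Barnes integral, whose invariance under $\alpha\leftrightarrow\beta$ is manifest, and then to recognise that integral as the stated $\Xi$-integral. The starting point, which is exactly what links the theorem to the present paper, is that the two rational terms in Hermite's formula \eqref{herm} are cancelled precisely by the correction built into $\varphi$; one checks at once that
\begin{equation*}
\varphi(s,a)=2\int_0^\infty\frac{\sin(s\tan^{-1}(x/a))}{(a^2+x^2)^{s/2}(e^{2\pi x}-1)}\,dx,
\end{equation*}
whence $\varphi(s,a)=O(a^{-\Re(s)-1})$ as $a\to\infty$ and the series $\sum_{n=1}^\infty\varphi(s,n\alpha)$ converges for $\Re(s)>0$.

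First I would insert the Mellin--Barnes representation \eqref{mt 2} (with $a=n\alpha$ and $y=x$) into this integral and interchange the orders of the $n$-summation, the $x$-integration and the $\xi$-integration, which is legitimate by absolute convergence when $1<\Re(s)<2$ and $-1<\Re(\xi)<0$. The two inner pieces are elementary: $\sum_{n\ge1}(n\alpha)^{\xi-s}=\alpha^{\xi-s}\zeta(s-\xi)$ and $\int_0^\infty x^{-\xi}(e^{2\pi x}-1)^{-1}\,dx=\Gamma(1-\xi)\zeta(1-\xi)/(2\pi)^{1-\xi}$. This writes $\sum_n\varphi(s,n\alpha)$ as a single integral along $\Re(\xi)=c_2\in(-1,0)$. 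Feeding in the functional equation $\zeta(1-\xi)=2(2\pi)^{-\xi}\cos(\pi\xi/2)\Gamma(\xi)\zeta(\xi)$ together with the reflection formula, the trigonometric and gamma factors collapse, via $\sin(\pi\xi/2)\cos(\pi\xi/2)\Gamma(\xi)\Gamma(1-\xi)=\pi/2$, to leave the $\xi\mapsto s-\xi$ symmetric integrand
\begin{equation*}
\alpha^{s/2}\sum_{n=1}^\infty\varphi(s,n\alpha)=\frac{1}{2\pi i}\int_{(c_2)}\frac{\Gamma(\xi)\Gamma(s-\xi)\zeta(\xi)\zeta(s-\xi)}{\Gamma(s)}\,\alpha^{\xi-s/2}\,d\xi.
\end{equation*}

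Next I would move the line of integration rightward from $\Re(\xi)=c_2$ to the axis of symmetry $\Re(\xi)=\Re(s)/2$, the horizontal pieces being controlled by Stirling's estimate \eqref{strivert}. In the intervening strip the integrand has exactly two poles: $\xi=0$ from $\Gamma(\xi)$ and $\xi=s-1$ from $\zeta(s-\xi)$; because $\Re(s)<2$ the poles of $\zeta(\xi)$ at $\xi=1$ and of $\Gamma(s-\xi)$ at $\xi=s$ remain to the right and are not crossed. Their residues, using $\zeta(0)=-\tfrac12$, are $-\tfrac12\zeta(s)\alpha^{-s/2}$ and $-\zeta(s-1)\alpha^{s/2-1}/(s-1)$, which are precisely the two terms subtracted on the left-hand side of the theorem; thus $\alpha^{s/2}\bigl(\sum_n\varphi(s,n\alpha)-\zeta(s)/(2\alpha^s)-\zeta(s-1)/((s-1)\alpha)\bigr)$ equals the integral over $\Re(\xi)=\Re(s)/2$. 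Parametrising this line by $\xi=\tfrac{s}{2}+\tfrac{it}{2}$ and using that the remaining integrand is even in $t$ turns $\alpha^{\xi-s/2}=\alpha^{it/2}$ into $\cos(\tfrac12 t\log\alpha)$; as this is the sole dependence on $\alpha$, the equality of the two $\varphi$-sides under $\alpha\leftrightarrow\beta=1/\alpha$ is now automatic.

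It remains to match the resulting integral $\tfrac{1}{2\pi}\int_0^\infty\Gamma(\tfrac{s+it}{2})\Gamma(\tfrac{s-it}{2})\zeta(\tfrac{s+it}{2})\zeta(\tfrac{s-it}{2})\cos(\tfrac12 t\log\alpha)\,dt/\Gamma(s)$ with the $\Xi$-form of the statement. Writing $\Gamma(w/2)\zeta(w)=2\pi^{w/2}\xi(w)/(w(w-1))$ and identifying, through $\xi(w)=\xi(1-w)$, the factors $\Xi(\tfrac{t+i(s-1)}{2})=\xi(\tfrac{s}{2}-\tfrac{it}{2})$ and $\Xi(\tfrac{t-i(s-1)}{2})=\xi(\tfrac{s}{2}+\tfrac{it}{2})$, the duplication formula $\Gamma(\tfrac{s-2\pm it}{4})\Gamma(\tfrac{s\pm it}{4})=2^{1-(s-2\pm it)/2}\sqrt{\pi}\,\Gamma(\tfrac{s-2\pm it}{2})$ converts the $\Gamma(\tfrac{s\pm it}{4})$ concealed in the two $\xi$'s into the $\Gamma(\tfrac{s\pm it}{2})$ of the derived integrand, producing the polynomial factors $s^2+t^2$ and $(s-2)^2+t^2$; collecting the powers of $2$ and $\pi$ reproduces exactly the prefactor $8(4\pi)^{(s-4)/2}/\Gamma(s)$ and the denominator $s^2+t^2$. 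All manipulations are valid for $1<\Re(s)<2$, and since both members are analytic in $0<\Re(s)<2$ the identity extends to the full strip by analytic continuation. The main difficulty is not any single algebraic identity but the analytic bookkeeping: justifying each interchange of sum and integral and the contour displacement via absolute convergence and \eqref{strivert}, and tracking the admissible ranges of $\xi$ and $s$ closely enough that precisely the two intended residues---and no others---are picked up.
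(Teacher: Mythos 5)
The first thing to note is that the paper does not prove Theorem \ref{mainnn} at all: it is quoted from the first author's earlier work \cite[Theorem 1.4]{dixit}, and the only thing proved in Section 3 is Corollary \ref{modlommel}, which is deduced \emph{from} the theorem. So your proposal cannot be compared against an internal proof; it has to stand on its own, and it does. I checked the pivotal computations. From Hermite's formula \eqref{herm}, $\varphi(s,a)$ is exactly the integral you wrote, giving $\varphi(s,a)=O(a^{-\Re(s)-1})$ and convergence of $\sum_n\varphi(s,n\alpha)$ for $\Re(s)>0$. For $1<\Re(s)<2$ and $-1<\Re(\xi)<0$, the $n$-sum, the $x$-integral and the $\xi$-integral converge absolutely (the sum needs $\Re(\xi)<\Re(s)-1$, which holds since $\Re(s)>1$), so the interchange is legitimate; the functional equation of $\zeta$ together with $\sin(\pi\xi/2)\cos(\pi\xi/2)\Gamma(\xi)\Gamma(1-\xi)=\pi/2$ does collapse the kernel to $\Gamma(\xi)\Gamma(s-\xi)\zeta(\xi)\zeta(s-\xi)/\Gamma(s)$; the shift to the line $\Re(\xi)=\Re(s)/2$ crosses exactly the two poles $\xi=0$ and $\xi=s-1$ (the hypothesis $\Re(s)<2$ keeps $\xi=1$ and $\xi=s$ strictly to the right), whose residues $-\tfrac{1}{2}\zeta(s)\alpha^{-s/2}$ and $-\zeta(s-1)\alpha^{s/2-1}/(s-1)$ are precisely the subtracted terms; and the final bookkeeping is right: using $\Xi\left(\tfrac{t-i(s-1)}{2}\right)=\xi\left(\tfrac{s+it}{2}\right)$, $\Xi\left(\tfrac{t+i(s-1)}{2}\right)=\xi\left(1-\tfrac{s-it}{2}\right)=\xi\left(\tfrac{s-it}{2}\right)$ and the duplication formula, the prefactors combine as $8(4\pi)^{(s-4)/2}\cdot\tfrac{1}{64}\cdot 4\cdot 2^{4-s}\pi^{1-s/2}=\tfrac{1}{2\pi}$, matching your symmetric-line integral exactly.

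Two small points should be made explicit in a final write-up. First, the assertion that ``both members are analytic in $0<\Re(s)<2$'' needs a word at $s=1$: the terms $\zeta(s)/(2\alpha^{s})$ and $\zeta(s-1)/((s-1)\alpha)$ each have a simple pole there, with residues $\tfrac{1}{2\alpha}$ and $-\tfrac{1}{2\alpha}$, so only their combination (hence each full member of the identity) is analytic, and the analytic-continuation step should be phrased accordingly. Second, your argument invokes the functional equation of $\zeta(s)$; this is harmless here --- Theorem \ref{mainnn} plays no role in the paper's Section 2 proof of Hurwitz's formula, so no circularity arises --- but it is worth flagging since the paper is careful about exactly this issue elsewhere. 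Methodologically, your route (series $\to$ symmetric Mellin--Barnes integral $\to$ $\Xi$-integral) is built from the same toolkit the paper uses in Section 2 (Hermite's formula \eqref{herm}, the Mellin transform \eqref{mt 2}, Stirling's bound \eqref{strivert}, residue computations), and it provides a clean, self-contained substitute for the external citation.
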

In view of the first equality in \eqref{befmas}, the modular-type transformation in the above result can be rephrased in the following form.
\begin{corollary}\label{modlommel}
Let $0<\textup{Re}(s)<2$ and let $\sigma_{s}(n)=\sum_{d|n}d^s$. Let $S_{\mu, \nu}(z)$ be defined in \eqref{lommeldef1}. If $\alpha$ and $\beta$ are any positive numbers such that $\alpha\beta=1$,
\begin{align*}
&\a^{\frac{s}{2}}\left(2s(2\pi)^{s-\frac{1}{2}}\sqrt{\a}\sum_{m=1}^{\infty}\sigma_{1-s}(m)m^{s-\frac{1}{2}}S_{-s-\frac{1}{2}, \frac{1}{2}}(2\pi m\a)-\frac{\zeta(s)}{2\alpha^{s}}-\frac{\zeta(s-1)}{(s-1)\alpha}\right)\nonumber\\
&=\b^{\frac{s}{2}}\left(2s(2\pi)^{s-\frac{1}{2}}\sqrt{\b}\sum_{m=1}^{\infty}\sigma_{1-s}(m)m^{s-\frac{1}{2}}S_{-s-\frac{1}{2}, \frac{1}{2}}(2\pi m\b)-\frac{\zeta(s)}{2\b^{s}}-\frac{\zeta(s-1)}{(s-1)\b}\right).
\end{align*}
\end{corollary}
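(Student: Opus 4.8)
The plan is to feed the Lommel-series representation of $\varphi(s,x)$ furnished by the first equality in \eqref{befmas} into Theorem \ref{mainnn}, and then to collapse the resulting double series into a single Dirichlet-type series with divisor-function coefficients. Since $\tfrac{a^{1-s}}{s-1}=-\tfrac{a^{1-s}}{1-s}$, the first equality in \eqref{befmas} is precisely the statement that, for $a>0$ and $s\neq1$,
\begin{equation*}
\varphi(s,a)=2s\sqrt{a}\,(2\pi)^{s-\frac12}\sum_{k=1}^\infty\frac{S_{-s-\frac12,\frac12}(2\pi a k)}{k^{\frac12-s}},
\end{equation*}
and in particular this holds throughout the strip $0<\Re(s)<2$ of Theorem \ref{mainnn}. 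First I would put $a=n\a$ and sum over $n\geq1$ to obtain
\begin{equation*}
\sum_{n=1}^\infty\varphi(s,n\a)=2s\sqrt{\a}\,(2\pi)^{s-\frac12}\sum_{n=1}^\infty\sum_{k=1}^\infty\frac{\sqrt{n}\,S_{-s-\frac12,\frac12}(2\pi\a nk)}{k^{\frac12-s}}.
\end{equation*}

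Next I would regroup the double sum according to the value of $m=nk$. For fixed $m$ the Lommel factor is $S_{-s-\frac12,\frac12}(2\pi\a m)$, and its coefficient is
\begin{equation*}
\sum_{k\mid m}\sqrt{\tfrac{m}{k}}\;k^{s-\frac12}=\sqrt{m}\sum_{k\mid m}k^{s-1}=\sqrt{m}\,\sigma_{s-1}(m)=m^{s-\frac12}\sigma_{1-s}(m),
\end{equation*}
where the last equality uses the elementary symmetry $\sigma_{s-1}(m)=m^{s-1}\sigma_{1-s}(m)$ (replace $d$ by $m/d$ in the divisor sum). Hence $\sum_{n=1}^\infty\varphi(s,n\a)$ equals $2s(2\pi)^{s-\frac12}\sqrt{\a}\sum_{m=1}^\infty\sigma_{1-s}(m)\,m^{s-\frac12}\,S_{-s-\frac12,\frac12}(2\pi m\a)$, which is exactly the bracketed expression on the left of the corollary; the two subtracted $\zeta$-terms are carried over verbatim from Theorem \ref{mainnn}. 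Running the identical computation with $\a$ replaced by $\b$ transforms the right-hand side, and the corollary then follows at once from Theorem \ref{mainnn}.

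The step I expect to be the main obstacle is the rigorous justification of this rearrangement, which rests on absolute convergence of the double series. For this I would invoke the large-argument asymptotics $S_{-s-\frac12,\frac12}(z)=O\!\left(z^{-\Re(s)-\frac32}\right)$ as $z\to\infty$, read off from the standard asymptotic expansion of $S_{\mu,\nu}$ (cf.\ \cite{Watson}). The general term of the double series is then of size $O\!\left(n^{-\Re(s)-1}k^{-2}\right)$, and $\sum_{n,k\geq1}n^{-\Re(s)-1}k^{-2}$ converges for $\Re(s)>0$; since this includes the whole strip $0<\Re(s)<2$, both the interchange of the two summations and the regrouping by $m=nk$ are legitimate.
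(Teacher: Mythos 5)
Your proposal is correct and follows essentially the same route as the paper: both substitute the Lommel-series form of $\varphi(s,x)$ given by the first equality in \eqref{befmas} into Theorem \ref{mainnn} and regroup the resulting double sum over $n,k$ by $m=nk$ to obtain the coefficients $\sigma_{1-s}(m)m^{s-\frac{1}{2}}$. Your justification of the rearrangement via the asymptotic $S_{-s-\frac{1}{2},\frac{1}{2}}(z)=O\left(z^{-\Re(s)-\frac{3}{2}}\right)$ is a welcome addition that the paper leaves implicit.
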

\begin{proof}
The result follows at once if we observe that from \eqref{befmas},
\begin{align*}
\sum_{n=1}^{\infty}\varphi(s,n\a)&=2s(2\pi)^{s-\frac{1}{2}}\sqrt{\a}\sum_{n=1}^{\infty}\sqrt{n}\sum_{k=1}^\infty\frac{S_{-s-\frac{1}{2}, \frac{1}{2}}(2\pi nk\a)}{k^{\frac{1}{2}-s}}\nonumber\\
&=2s(2\pi)^{s-\frac{1}{2}}\sqrt{\a}\sum_{n, k=1}^{\infty}n^{1-s}(nk)^{s-\frac{1}{2}}S_{-s-\frac{1}{2}, \frac{1}{2}}(2\pi nk\a)\nonumber\\
&=2s(2\pi)^{s-\frac{1}{2}}\sqrt{\a}\sum_{m=1}^{\infty}\sigma_{1-s}(m)m^{s-\frac{1}{2}}S_{-s-\frac{1}{2}, \frac{1}{2}}(2\pi m\a).
\end{align*}
\end{proof}
\begin{remark}
The series in Corollary \ref{modlommel} should be compared with the series considered by Lewis and Zagier in \cite[Equation (2.11)]{lewzag}, namely, $\sum_{n=1}^{\infty}n^{s-1/2}A_n\mathcal{C}_{s}(2\pi na)$, where $\mathcal{C}_{s}(z)$ is defined by \eqref{mathcalc}.
\end{remark}
\begin{center}
\textbf{Acknowledgements}
\end{center}

\noindent
The first author's research is partially supported by the SERB MATRICS grant MTR/2018/000251. He sincerely thanks SERB for the support.

\end{document}